\documentclass[10pt,journal,compsoc,twocolumn]{IEEEtran}
\usepackage{url}
\usepackage{enumitem} 
\usepackage{amsfonts}
\usepackage{amsmath}
\usepackage{amssymb}
\usepackage{color}
\usepackage{wrapfig}
\usepackage{graphicx} 
\usepackage{epstopdf}
\usepackage{bm}
\usepackage{xcolor}
  \usepackage{amsthm}
\usepackage{dcolumn}
\usepackage{bm}
\usepackage{hyperref}
\usepackage{dsfont}
\usepackage[caption=false]{subfig}
\usepackage{float}
\usepackage{microtype}
\usepackage{lipsum}
\usepackage{cuted}

\newtheorem{theorem}{Theorem}

\usepackage{dblfloatfix}
\begin{document}
\title{Optimal Scheduling of Dengue Vector Control}

\author{ Aram Vajdi$\ ^{a}$, Lee W. Cohnstaedt$\ ^{b,*}$, Caterina M. Scoglio$\ ^{c}$, Heman Shakeri$\ ^{a}$

\IEEEcompsocitemizethanks{\IEEEcompsocthanksitem $^{*}$ Corresponding author. E-mail address: lee.cohnstaedt@usda.gov (L.W. Cohnstaedt). 
\protect\\$^{a}$ School of Data Science, University of Virginia, Charlottesville, Virginia, USA
\protect\\ $^{b}$ United States Department of Agriculture, Agricultural Research Service, Foreign Arthropod-Borne Animal Diseases Research Unit, Manhattan, Kansas, USA
\protect\\$^{c}$ Department of Electrical and Computer Engineering, Kansas State University,
Manhattan, KS, USA 

}     
}

\IEEEtitleabstractindextext{%
\begin{abstract}
Dengue transmission is shaped by the population dynamics of the \textit{Aedes aegypti} mosquito, making vector control a central strategy for disease mitigation. The impact of interventions such as larvicide, adulticide, and breeding-site reduction depends critically on their timing under fluctuating environmental conditions. We build on a high-fidelity, non-Markovian mechanistic model of the \textit{Aedes} life cycle that captures stage-structured, temperature-dependent developmental delays, and mortality, and extend it to incorporate multiple vector control measures. Rather than using continuous abstract control amplitudes as in standard optimal control formulations, we introduce intervention-specific temporal profiles that better reflect operational practice. We then develop an adjoint-based gradient descent framework to compute the optimal timing of a sequence of interventions by minimizing the time-dependent dengue reproduction number, $R_{0}$. Numerical simulations based on seasonal temperature data from Miami, Florida, show that appropriately timed combinations of interventions can substantially suppress transmission risk, with outcomes strongly influenced by seasonal temperature variation and intervention duration. We further propose embedding the resulting optimization framework within a Model Predictive Control architecture, yielding a closed-loop approach for real-time, surveillance-driven vector management under environmental and operational uncertainty.
\end{abstract}

\begin{IEEEkeywords}
 Dengue, \textit{Aedes} mosquitoes, Non-Markovian Models, Optimal Control
\end{IEEEkeywords}}

\maketitle

\IEEEdisplaynontitleabstractindextext

%
\IEEEpeerreviewmaketitle

\IEEEraisesectionheading{\section{Introduction}\label{sec:introduction}}
Dengue is a mosquito-borne viral disease that has become a growing public health concern in many tropical and subtropical regions of the world~\cite{bhatt2013global}. The virus is primarily transmitted by the \textit{Aedes aegypti} mosquito, which thrives in urban environments and breeds in bodies of standing water common in such settings. 
During the gonotrophic cycle, female \textit{Aedes aegypti} mosquitoes feed on blood sources, making them highly efficient vectors for dengue transmission in densely populated areas. 
In the absence of broadly effective vaccines or antiviral treatments, most public health efforts have focused on vector control strategies aimed at reducing mosquito populations or limiting human--mosquito contact, including larval habitat reduction and the application of larvicides and adulticides~\cite{bowman2016dengue, achee2015critical, kalimuddin2025dengue}. However, the effectiveness of these interventions varies widely depending on the type of control measure and how it is implemented~\cite{alvarado2017assessing, samuel2017community, wilke2021effectiveness}, and some findings indicate that effective dengue control requires an optimal combination of multiple approaches~\cite{horstick2018building, jaffal2023current}. 

In this work, we investigate the optimal scheduling of vector control strategies using a high-fidelity mechanistic model of the \textit{Aedes} mosquito population. The mosquito life cycle is strongly influenced by environmental factors such as temperature and available breeding sites, with temperature governing key developmental and survival rates. Therefore, accurately capturing the effects of temporal variations in these environmental drivers is essential for modeling population dynamics and determining optimal control strategies. A growing body of literature has developed mechanistic compartmental models to describe dengue transmission dynamics and the population dynamics of \textit{Aedes aegypti} by incorporating entomological and environmental parameters into dynamic mathematical frameworks~\cite{focks1993dynamic, yang2011follow, yang2009assessing, yang2009assessing2, mordecai2017detecting, pliego2017seasonality, kilicman2021development}. Other studies have extended these compartmental models to include stochasticity, spatial structure, and additional factors such as vector mobility and human behavior~\cite{demers2020managing, becerra2026stochastic}. In addition, statistical and machine learning approaches have incorporated climatic variables to model dengue risk and mosquito abundance, with spatiotemporal data shown to significantly improve predictive accuracy~\cite{aswi2019bayesian, dom2025fine, rocha2022machine, chen2023linking}. To compute the temporal mosquito population, we use a model we recently developed to capture the non-Markovian nature of the mosquito life cycle~\cite{vajdi2024assessing}. The model accounts for habitat temperature through temperature-dependent development rates obtained from experimental studies. In contrast to standard Markovian formulations, it allows transitions to depend on residence time within each life stage, providing a more faithful representation of developmental delay and its effect on mosquito population dynamics under changing environmental conditions.

Typically, dengue optimal-control formulations in the literature represent interventions as generic bounded control functions. In contrast, we model mosquito control through intervention-specific temporal profiles that more closely reflect how these measures are implemented in practice. In particular, larvicide and adulticide are introduced as additional time-dependent mortality terms acting on the larval and adult stages, respectively, each characterized by a prescribed efficacy and finite duration of action. Breeding-site elimination, on the other hand, is represented as a transient reduction in environmental carrying capacity, followed by gradual recovery as breeding sites reappear over time. Accordingly, rather than optimizing arbitrary control amplitudes, our objective is to determine the optimal timing of a sequence of operationally realistic intervention events by identifying the optimal placement of the leading edges of these control profiles. More importantly, our formulation is built on a high-fidelity, temperature-driven, non-Markovian model of the mosquito life cycle, in which development across the egg, larval, pupal, and gonotrophic stages is accurately represented through sequential auxiliary substates. To determine the optimal timing of a sequence of interventions, we employ an adjoint-based gradient-descent framework to minimize the cumulative time-dependent dengue reproduction number, which we use as a proxy for transmission risk.
 
Previous dengue-control studies have typically relied on Markovian host--vector formulations, in which the mosquito life cycle is often reduced to a few aggregated compartments and interventions enter the system as continuous time-dependent control variables. In~\cite{rodrigues2012modeling}, larvicide, adulticide, and mechanical control are introduced as optimal-control functions in a six-compartment dengue model, and the results show that adulticide has the strongest short-term effect on lowering the basic reproduction number, while larval control becomes more important over longer horizons. In~\cite{pliego2020control}, season-dependent control is studied using chemical interventions in the hot season and mechanical breeding-site reduction in the cool season, with quadratic cost functionals and successive optimal-control problems solved over alternating seasonal intervals. In contrast to our framework, \cite{de2023multiobjective} is based on a climate-dependent epidemiological transmission model that incorporates temperature, precipitation, and humidity into the disease dynamics and formulates the intervention problem as a multiobjective evolutionary optimization task. In that work, the optimization variables include control-related parameters such as the magnitudes and initiation times of larvicide- and adulticide-type interventions. By contrast, our work is built on a high-fidelity, temperature-driven, non-Markovian mosquito life-cycle model, and the optimization is carried out through an adjoint-based gradient framework to determine the optimal timing of intervention-specific control profiles.
In~\cite{abidemi2024optimal}, the authors developed an optimal control model for dengue transmission in a coupled human--mosquito compartmental system that includes asymptomatic, isolated, and vigilant human classes. The study considers three time-dependent interventions: personal protection against mosquito bites, treatment of infected individuals, and insecticide spraying to reduce the mosquito population. They use optimal control theory to identify efficient and cost-effective intervention combinations. However, the model does not incorporate temperature, seasonality, or other environmental forcing, focusing instead on intervention strategies within a standard epidemiological framework. Another study~\cite{arias2020biological} investigated the optimal control of \textit{Aedes aegypti} using a combination of chemical interventions and biological control within a predator--prey framework. The model includes mosquito aquatic and adult stages, as well as a predator population feeding on immature mosquitoes. However, it assumes constant entomological rates and does not account for temperature or seasonal variation. In~\cite{agusto2018optimal}, the authors consider a dengue transmission model that includes adult mosquitoes and investigate vaccination and insecticide application as control measures. Using optimal control theory, they minimize disease burden and intervention costs in a setting where the entomological parameters are assumed to be constant. In a similar spirit, Miah et al.~\cite{miah2025optimal} and Herdicho et al.~\cite{herdicho2025optimal} developed optimal-control models for dengue transmission that examine combinations of interventions to reduce disease burden and implementation costs. While Miah et al.\ considered a human--vector SEIR--SEI framework with four control strategies (public awareness, treatment, adulticide, and larval-source reduction), Herdicho et al.\ focused on a sex-structured dengue hemorrhagic fever model with fumigation and exposure-reduction measures against mosquito bites. In~\cite{wijaya2014optimal}, a model was developed based on an indoor--outdoor life-cycle structure, with separate indoor and outdoor egg and larval compartments and a shared adult mosquito compartment. The control measures target eggs and larvae in indoor breeding sites as well as adult mosquitoes, and optimal control theory was used to minimize mosquito abundance while also minimizing control cost.

Compared with previous studies, we shift the focus from optimizing continuous control amplitudes in standard compartmental dengue-transmission models to optimizing the timing and scheduling of a sequence of predetermined interventions (namely larvicide application, adulticide application, and habitat elimination) under changing environmental conditions in which temperature modulates the entomological rates governing the \textit{Aedes aegypti} life cycle and, consequently, mosquito population dynamics. To describe mosquito population dynamics under these control measures, we use a non-Markovian development model that more accurately captures the transition time between life stages while accounting for temperature variation in the habitat. We then develop a gradient-based algorithm, derived from the adjoint method, to determine the optimal scheduling of the control measures to minimize the risk of dengue transmission. We have also implemented the algorithm described in this paper, and the code is available upon request from the authors. The rest of the paper is organized as follows. In Section~2, we first discuss the non-Markovian model for mosquito population dynamics under a sequence of control measures and then derive the optimization framework based on the adjoint method. In Section~3, we demonstrate the application of the framework by presenting numerical results using temperature data for Miami, Florida, and determine the optimal scheduling of various combinations of control measures under different levels of effectiveness. Finally, in Section~4, we discuss the integration of our framework into a Model Predictive Control architecture for real-time, surveillance-driven vector management, and outline directions for future work.

\section{Mosquito Life-Cycle Model and Transmission Risk} 
Transmission of vector-borne diseases such as dengue depends fundamentally on the population of mosquitoes capable of carrying and transmitting the virus. Dengue is transmitted to humans primarily by the bite of an infected adult female \textit{Aedes} mosquito, which acquires the virus by feeding on an infected person. Once infected, the mosquito remains infectious for the rest of its life and can continue transmitting the virus during subsequent blood meals. Therefore, when factors such as human immunity, mobility patterns, and human--mosquito contact rates, which are highly location-dependent, are not considered, mosquito-related factors such as the adult female mosquito population, biting rate, and extrinsic incubation period become crucial in determining dengue transmission risk. In fact, as shown in~\cite{vajdi2024assessing}, for a homogeneously mixed susceptible population, the basic reproduction number for dengue can be expressed as
\begin{equation}\label{reproF}
R_0=\frac{A}{N_H}n_B^2\gamma_{a,e}^2\phi_{HV}\phi_{VH} \gamma_{a,d}^{-1} \eta_H^{-1}\left(1+\frac{\gamma_{a,d}}{\gamma_V}\right)^{-1},
\end{equation} 
where $A$ is the adult female mosquito population; 
$N_H$ is the human population; 
$\eta_H^{-1}$ and $\gamma_V^{-1}$ are the intrinsic and extrinsic incubation periods, respectively; 
$\gamma_{a,d}$ and $\gamma_{a,e}^{-1}$ denote the adult mosquito mortality rate and the expected length of the gonotrophic cycle; 
$n_B$ is the average number of bites a vector gives to hosts during a gonotrophic cycle; 
and $\phi_{VH}$ and $\phi_{HV}$ represent the probability that an infected vector transmits the infection to a human, or acquires the infection from a human, via a single bite. The reproduction number is derived by analyzing the stability of the disease-free state under the assumption of constant temperature. When $R_0 > 1$, the disease-free state becomes unstable and an infected population can grow. In this study, we use $R_0$ as a proxy for virus transmission risk and seek to determine the optimal implementation of mosquito control measures that minimizes $R_0$ over a specified time horizon. It is reasonable to treat $R_0$ as a measure of transmission risk, since its value has a clear interpretation: it represents the expected number of secondary infected mosquitoes produced when a single infected mosquito is introduced into a fully susceptible population of hosts and vectors.
\subsection{Mosquito Population Dynamics} \label{dynm}
Considering the transmission risk defined by Equation~\eqref{reproF}, there are several possible strategies to reduce it, such as controlling the population of adult female mosquitoes, $A$, or decreasing the parameters $\phi_{HV}$, $\phi_{VH}$, and $n_{B}$ by modifying vector--host interactions. However, in this study we focus on control measures that directly target the mosquito population, such as applying larvicide or adulticide and eliminating breeding sites. 

To identify the optimal implementation of a combination of such control measures, we require a high-fidelity mathematical model that accurately describes the mosquito life cycle. The population dynamics of mosquitoes are intrinsically governed by various rates across their life stages, including egg, larval, and pupal development rates, the gonotrophic cycle duration, and the mortality rates in each stage. Each of these rates exhibits strong temperature dependence, which requires careful consideration when developing a mathematical model to describe the population dynamics of mosquitoes. In our previous study~\cite{vajdi2024assessing}, we showed that traditional ordinary differential equation models with temperature-dependent rates do not capture the non-Markovian nature of the biological processes governing the developmental stages of mosquitoes. To accurately describe these processes, it is necessary to employ integro-differential equations rather than standard ODE formulations. However, we showed that such integro-differential equations can be rewritten as a system of ODEs by introducing auxiliary states that record the developmental history at each stage. Specifically, by introducing $J$ sequential substates within each stage of the \textit{Aedes} mosquito life cycle, we can derive a system of ODEs that accurately captures the non-Markovian nature of development across all stages. These ODEs are given by:
\begin{equation}\label{odeF}
{\small
\begin{split}
\frac{1}{J}\frac{d\bm{L}_{j}}{dt}=&f_1\!\left(\sum_i \bm{L}_i, C\right)~\gamma_{e,l}(t)\bm{E}_{J}\ 1_{j=1} \\ &+\gamma_{l,p}(t)\left(1_{j>1}\ \bm{L}_{j-1}-\bm{L}_{j}\right)-\frac{1}{J}\gamma_{l,d}(t)\bm{L}_{j} \\
\frac{1}{J}\frac{d\bm{P}_{j}}{dt}=&\gamma_{l,p}(t)\bm{L}_{J}\ 1_{j=1}+\gamma_{p,a}(t)\left(1_{j>1}\ \bm{P}_{j-1}-\bm{P}_{j}\right) \\ &-\frac{1}{J}\gamma_{p,d}(t)\bm{P}_{j} \\
\frac{1}{J}\frac{d\bm{A}_{j}}{dt}=&\left(\frac{1}{2}\gamma_{p,a}(t)\bm{P}_{J}+\gamma_{a,e}(t)\bm{A}_{J}\right)\ 1_{j=1} \\ &+\gamma_{a,e}(t)\left(1_{j>1}\ \bm{A}_{j-1}-\bm{A}_{j}\right)-\frac{1}{J}\gamma_{a,d}(t)\bm{A}_{j} \\
\frac{1}{J}\frac{d\bm{E}_{j}}{dt}=&ov(t)\gamma_{a,e}(t)\bm{A}_{J}\ 1_{j=1}+\gamma_{e,l}(t)\left(1_{j>1}\ \bm{E}_{j-1}-\bm{E}_{j}\right)\\ &-\frac{1}{J}\gamma_{e,d}(t)\bm{E}_{j} \\
\end{split}
}
\end{equation}
where $j \in \{1,\dots,J\}$, $1_{j=1}$ is an indicator function, 
$\gamma_{e,l}(t)$, $\gamma_{l,p}(t)$, $\gamma_{p,a}(t)$, and $\gamma_{a,e}(t)$ 
are the temperature-dependent development rates of the egg, larval, pupal, and gonotrophic stages, respectively, and 
$\gamma_{e,d}(t)$, $\gamma_{l,d}(t)$, $\gamma_{p,d}(t)$, and $\gamma_{a,d}(t)$ 
are the temperature-dependent mortality rates of eggs, larvae, pupae, and adult mosquitoes. Assuming $C$ is the carrying capacity of the environment, 
$f_1\!\left(\sum_i \bm{L}_i, C\right)$ is a function that decreases as the total larval population increases, becoming zero when the total number of larvae reaches $C$ and equaling one when the larval population is zero. A common choice for such a function is
\[
f_1\!\left(\sum_i \bm{L}_i,\, C\right)
= \left(1 - \frac{\sum_i \bm{L}_i(t)}{C(t)}\right).
\]

In this study, we consider three different control measures that can be implemented to reduce the population of adult female mosquitoes. To simulate the effect of larvicide, we assume that larvicide applications can occur at various times throughout the year. We also assume that the efficacy of the larvicide application is known, and our goal is to determine the optimal timing for a sequence of larvicide applications. We model the effect of larvicide by adding the control-induced mortality to the natural, temperature-dependent larval mortality rate. The following expression is used to represent the control-induced mortality rate:
\begin{equation}
r_{p_l}(t) = \sum_i \sigma_l(t-p_l^i) 
\end{equation}
Here, $\sigma_l$ is a function that defines the profile of each larvicide application, and $p_l^i$ are control parameters representing the timing of each larvicide application. We assume that $\sigma_l$ is continuous with a continuous first derivative. A simple choice for $\sigma_l$ is a function with a fast rising edge that saturates within a few hours, remains approximately constant for several days, and then returns to zero. Figure~\ref{sigmal} illustrates one possible choice for such a function, constructed as the product of two sigmoid functions.
\begin{figure}[H]
    \centering
    \vspace{-1pt}
    \includegraphics[width=0.33\textwidth]{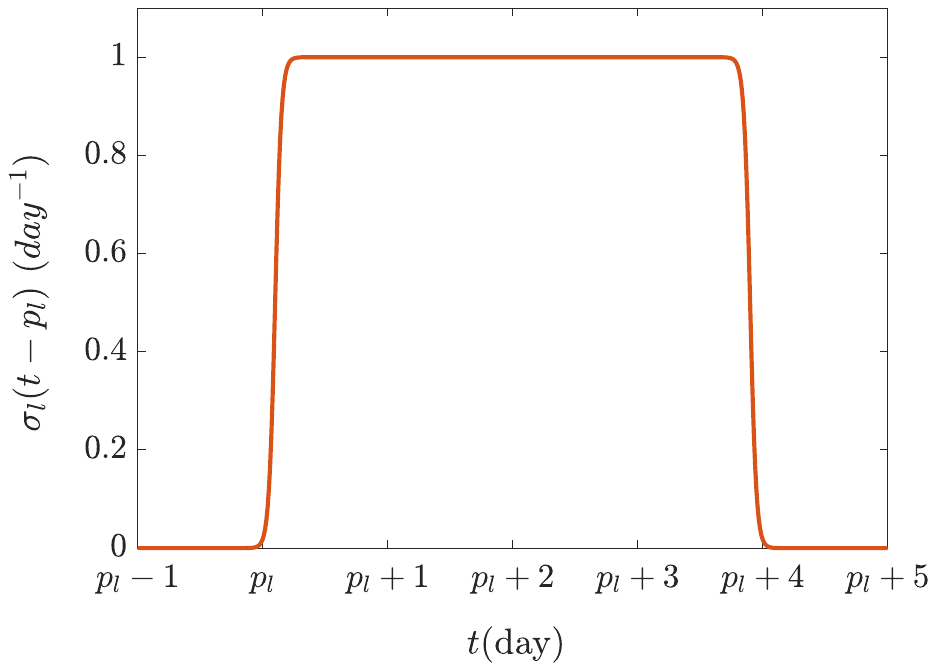}
    \caption{Product of two sigmoid functions used as the time-dependent profile of the induced larval mortality rate for a single application of larvicide.}
    \label{sigmal}
\end{figure}
 
Similarly, we model the effect of adulticide applications by adding a time-dependent mortality term to the natural adult mosquito mortality rate. This term is represented as a sum of pulse functions with different starting times. We note that adulticide efficacy and duration may differ from those of larvicide, and therefore we use a different activation profile, $\sigma_a$, such that the control-induced mortality rate becomes
\begin{equation}
r_{p_a}(t) = \sum_i \sigma_a(t-p_a^i),
\end{equation}
where each $p_a^i$ denotes the timing of the 
$i$-th adulticide application.

In this study, we also consider the control measure of eliminating breeding sites where female mosquitoes lay their eggs. Since \textit{Aedes aegypti} is an urban mosquito species, examples of breeding sites include various water containers such as flower pots, discarded containers, or even potholes that collect water. We model this control measure through its effect on the carrying capacity, $C$, in Equation~\eqref{odeF}. We also account for the fact that, after the elimination of breeding sites, these sites may reappear over time. Therefore, we write the average carrying capacity in an area as  
\begin{equation}\label{cfor}
    C_{p_c}(t) = \big(c_0 - \sum_i \Delta(p_c^i, t)\big)\,\alpha(t),
\end{equation}
where $c_0$ is a constant representing the average number of breeding sites before any control measures are applied, and multiplication by $\alpha(t)$ yields the corresponding carrying capacity after accounting for precipitation effects. The effect of eliminating breeding sites is modeled by subtracting the terms $\Delta(p_c^i, t)$ from the baseline carrying capacity. Each $\Delta(p_c^i, t)$ denotes a campaign aimed at eliminating breeding sites, with a specific efficacy, and $p_c^i$ represents the timing of the intervention. Because eliminated sites may eventually be replenished, the functions $\Delta(p_c^i, t)$ are time-dependent and return to zero after some period. Figure~\ref{deltac} illustrates one possible choice for $\Delta(p_c^i, t)$, constructed as the product of a sigmoid function and an exponential decay with an expected value of 33 days, representing an average reappearance time of eliminated breeding sites.
\begin{figure}[H]
\vspace{-0pt}
    \centering
    \includegraphics[width=0.33\textwidth]{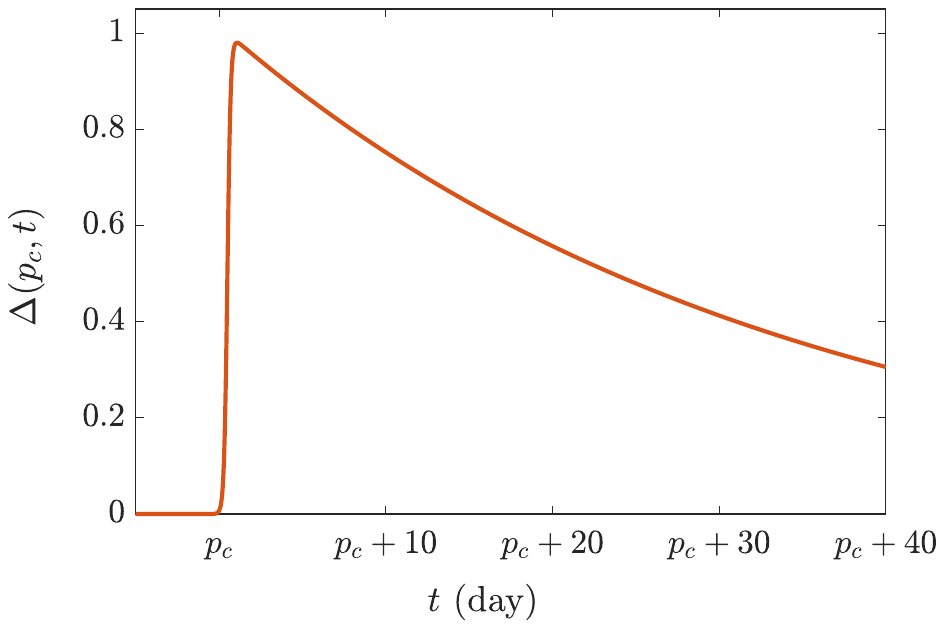}
    \caption{Time-dependent profile of breeding sites after an elimination campaign, with a decay term representing the replenishment of breeding sites.}
    \label{deltac}
\end{figure}

\begin{figure*}[!t]
\centering
\includegraphics[width=\textwidth]{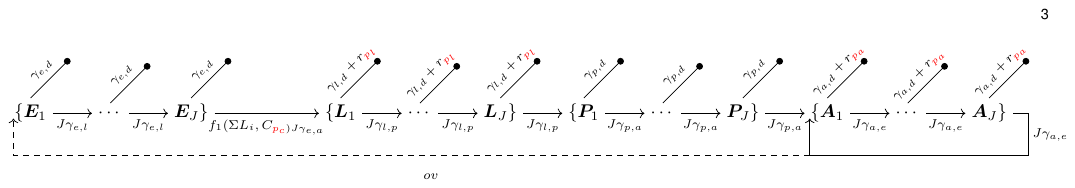}
\caption{Life cycle of the \textit{Aedes} mosquito. The compartments \(\bm{E}\), \(\bm{L}\), \(\bm{P}\), and \(\bm{A}\) denote the egg, larval, pupal, and adult stages, respectively. Each life stage is subdivided into \(J\) sequential substates to more accurately represent the developmental delay required for progression to the next stage. The black circle denotes the dead state. All transition rates, mortality rates, and the number of eggs produced per gonotrophic cycle, \(ov\), are temperature-dependent and therefore time-dependent. The quantities \(r_{p_l}(t)\) and \(r_{p_a}(t)\) denote the additional time-dependent mortality rates induced by larvicide and adulticide application, respectively, and \(C_{p_c}(t)\) denotes the time-dependent reduced carrying capacity resulting from breeding-site elimination. Here, \(p_l\), \(p_a\), and \(p_c\) are the control parameters, representing in particular the timing of the corresponding interventions.}
\label{lifecycleF}
\end{figure*}
In Figure~\ref{lifecycleF}, we show the diagram corresponding to the ODE system~\eqref{odeF} used to simulate the dynamics of the mosquito population. The diagram illustrates the transitions between different life stages and their temperature-dependent development and mortality rates, including the modifications induced by the various control measures.

\subsection{Optimizing Control Measures}\label{opf}
In this study, we aim to determine the optimal scheduling of a combination of mosquito control measures over a given time horizon, with the objective of minimizing the average transmission risk during the same interval. The problem can be formulated mathematically as
\begin{equation} \label{optim}
\begin{split}
\underset{\{p\}}{\text{Minimize}} \quad 
& F = \int_0^{T} R_0\big(\{\mathbf{A}\}, t\big)\, dt \\
\text{subject to} \quad 
& h\big(x, \dot{x}, \{p\}, t\big) = 0,\quad x \in \{\mathbf{A}, \mathbf{P}, \mathbf{L}, \mathbf{E}\}, \\
& g\big(x(0)\big) = 0,
\end{split}
\end{equation}
where $R_0$ is defined in Equation~\eqref{reproF}, and $\{\mathbf{A}\}$ represents the set of states in the ODE system~\eqref{odeF} whose sum yields the adult female population, denoted by $A$ in Equation~\eqref{reproF}. Note that $R_0$ is time-dependent for two distinct reasons. First, several parameters (such as $n_B$) vary with temperature and therefore change over time, giving $R_0$ an intrinsic temperature-driven time dependence. Second, $R_0$ also depends on time through the adult female population itself, since the states in $\{\mathbf{A}\}$ evolve over time according to the ODE system. In problem~\eqref{optim}, $h\big(x, \dot{x}, \{p\}, t\big) = 0$ represents the implicit form of the ODE system~\eqref{odeF}, $\{p\}$ denotes the set of control parameters, and $g\big(x(0)\big)$ specifies the initial conditions.
 
To determine the optimal values of the control parameters $\{p\}$, we perform numerical optimization, which requires computing the total derivative of $F$ with respect to these parameters. These derivatives can be approximated by varying each parameter individually and solving the ODE system for the perturbed value of that parameter. This approach requires solving the ODE system $N_p$ times, where $N_p$ is the number of control parameters. In contrast, the adjoint method~\cite{bryson2018applied,pontryagin2018mathematical} provides an elegant and efficient way to compute all the derivatives simultaneously. In this approach, one first solves the ODE system forward in time to obtain the state variables. Using this solution, a corresponding adjoint ODE system is then formulated and solved backward in time. The adjoint variables quantify how perturbations in the states influence the objective functional $F$, allowing the derivatives $d_{p_i}F$ for all parameters $p_i \in \{p\}$ to be obtained after a single backward integration. Following the adjoint method, the ODE system for the adjoint states is written as
\[
\dot{\boldsymbol{\lambda}}^{T}\, \partial_{\dot{x}} h
= \boldsymbol{\lambda}^{T}\!\left( \partial_x h - d_t \partial_{\dot{x}} h \right)
+ \partial_x R_{0},
\]
where $\boldsymbol{\lambda}$ is the column vector of adjoint states and $\boldsymbol{\lambda}^{T}$ denotes its transpose. The derivation is presented in Appendix~Theorem~\ref{adjointTh}.
 
To explicitly write the adjoint equations, we introduce one adjoint state for each state variable in the ODE system~\eqref{odeF}. For example, for the $j$-th larval state $\boldsymbol{L}_{j}$, we introduce the corresponding adjoint variable $\boldsymbol{\lambda}_{L_j}$. Consequently, we obtain the adjoint ODE system~\eqref{odeB}, which must be integrated backward in time with the terminal condition $\boldsymbol{\lambda}(T) = 0$. Figure~\ref{backward} illustrates a diagram for the adjoint ODEs.
 \begin{figure*}[!t]
\centering
   \includegraphics[width=\textwidth]{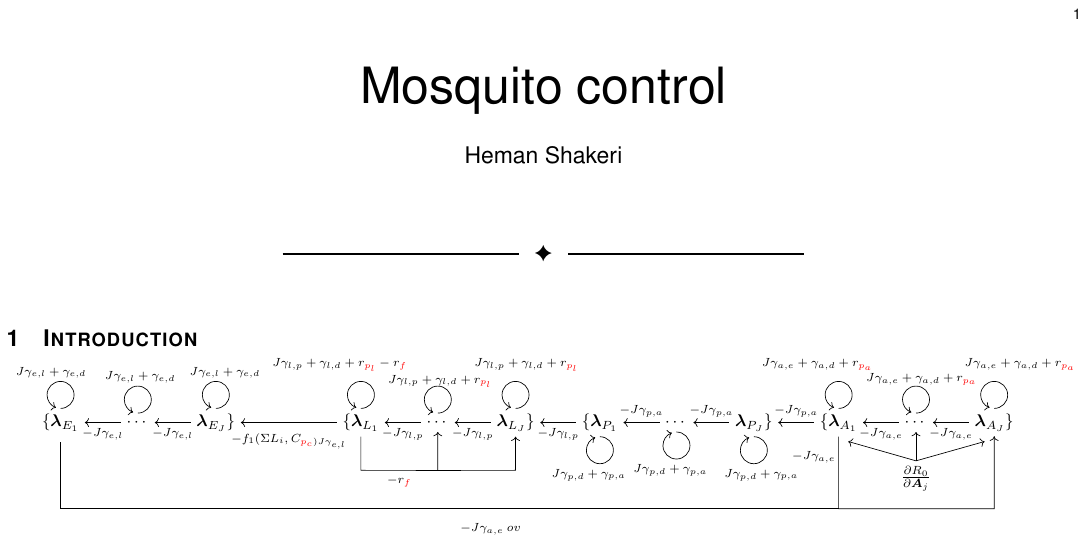}
   \caption{Diagram of the adjoint-state ODE system~\eqref{odeB}. The time derivative of each adjoint state is obtained by summing, over all arrows entering that adjoint state, the product of the variable at the tail of each arrow and the rate associated with that arrow.}  
\label{backward}
\end{figure*}

\begin{equation}\label{odeB}
\small{
\begin{split}
\frac{d\bm{\lambda}_{L_j}}{dt}&=\left(J\gamma_{l,p}(t)+\gamma_{l,d}(t) +r_{p_{l}}\right) ~\bm{\lambda}_{L_j}- 1_{j<J}~J\gamma_{l,p}(t) ~\bm{\lambda}_{L_{j+1}} \\ &  - 1_{j=J} ~J\gamma_{l,p}(t) ~\bm{\lambda}_{P_1} - \frac{\partial f_1\!\left(\sum_i \bm{L}_i, C_{p_c}\right)}{\partial  \bm{L}_j } ~J\gamma_{e,l}(t)~ \bm{E}_J ~\bm{\lambda}_{L_1}\\
\frac{d\bm{\lambda}_{P_j}}{dt}&=\left(J\gamma_{p,a}(t)+\gamma_{p,d}(t)\right) ~\bm{\lambda}_{P_j} - 1_{j<J}~J\gamma_{p,a}(t) ~\bm{\lambda}_{P_{j+1}} \\ &  - 1_{j=J} ~\frac{1}{2}J\gamma_{p,a}(t) ~\bm{\lambda}_{A_1}\\
\frac{d\bm{\lambda}_{A_j}}{dt}&=\left(J\gamma_{a,e}(t)+\gamma_{a,d}(t)+ r_{p_a}\right) ~\bm{\lambda}_{A_j} - 1_{j<J}~J\gamma_{a,e}(t) ~\bm{\lambda}_{A_{j+1}} \\ & - 1_{j=J} ~J\gamma_{a,e}(t) ~\left(\bm{\lambda}_{A_1} + ov(t)\bm{\lambda}_{E_1}\right) +\frac{\partial R_0}{\partial A_j}\\
\frac{d\bm{\lambda}_{E_j}}{dt}&=\left(J\gamma_{e,l}(t)+\gamma_{e,d}(t)\right) ~\bm{\lambda}_{E_j} - 1_{j<J}~J\gamma_{e,l}(t) ~\bm{\lambda}_{E_{j+1}} \\ & - 1_{j=J} f_1\!\left(\sum_i \bm{L}_i, C_{p_c}\right)~J\gamma_{e,l}(t) ~\bm{\lambda}_{L_1}\\
\end{split}
}
\end{equation}
 
After computing the adjoint states, and noting that neither the objective function nor the initial conditions depend on the control parameters, 
the gradient of the objective function $F$ with respect to the optimization parameters can be computed as
\[
d_p F = \int_{0}^{T} \big( \lambda^{T} \partial_{p} h \big)\, dt .
\]
 
Therefore, the gradient with respect to the timing of the $i$th larvicide application, $p_l^i$, is given by
\begin{equation}
\small{
d_{p_l^i} F 
= \int_{0}^{T} \sum_{j=1}^J \bm{\lambda}_{L_j}\, \bm{L}_j \,
\frac{\partial r_{p_l}}{\partial p_l^i} \, dt
= \int_{0}^{T} \sum_{j=1}^J \bm{\lambda}_{L_j}\, \bm{L}_j \,
\frac{\partial \sigma_l(t - p_l^i)}{\partial p_l^i} \, dt ,}
\end{equation}
which can be computed numerically since both the state variables from the forward ODE system and the adjoint states from the backward ODE system are available.
 
Similarly, the gradient with respect to the timing of the $i$th adulticide application, $p_a^i$, is
\begin{equation}
d_{p_a^i} F 
= \int_{0}^{T} \sum_{j=1}^J \bm{\lambda}_{A_j}\, \bm{A}_j \,
\frac{\partial \sigma_a(t - p_a^i)}{\partial p_a^i} \, dt .
\end{equation}
 
Finally, the gradient with respect to the timing of the $i$th habitat elimination action, $p_c^i$, is given by
\begin{equation}
\small{
d_{p_c^i} F 
= \int_{0}^{T} \bm{\lambda}_{L_1}\, \bm{E}_J \,
\frac{\partial f_1\!\left(\sum_i \bm{L}_i, C_{p_c}\right)}{\partial C_{p_c}}
\, \frac{\partial \Delta(p_c^i, t)}{\partial p_c^i}
\, \alpha(t)\, J\gamma_{e,l}(t)\, dt .}
\end{equation}
 
\section{Numerical Results}
In this section, we present numerical results examining the optimal timing of various \textit{Aedes aegypti} mosquito control strategies and their impact on dengue transmission risk. These calculations are carried out using the non-Markovian dynamical model of the mosquito population together with the optimization framework introduced in Sections~\ref{dynm} and~\ref{opf}. To perform these calculations, we use temperature data for Miami-Dade County, Florida. Specifically, we used ERA5 reanalysis temperature data~\cite{hersbach2020era5} for the years 2023 and 2024. For each calendar day, we compute the average temperature for the corresponding days in the two-year dataset. Finally, a one-week moving average is applied to the resulting temperature time series to obtain a smooth seasonal temperature profile. 
 
\subsection{Adulticide Application}\label{ad_sec}
\begin{figure*}[t]
\centering
\subfloat[]{ \includegraphics[width=0.45\textwidth]{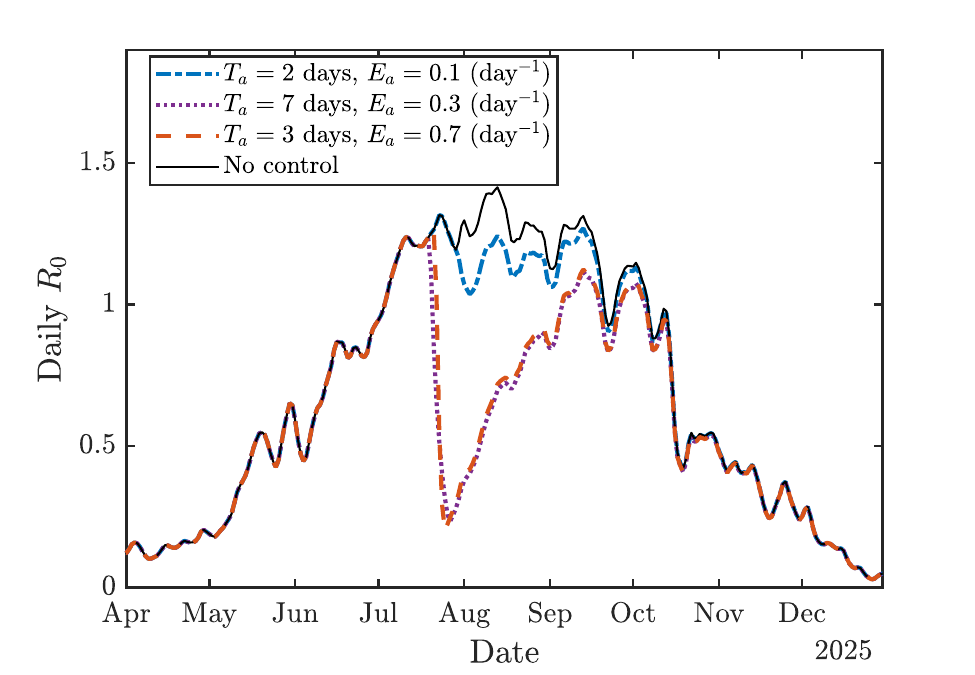} \label{Ic3} }
\subfloat[]{ \includegraphics[width=0.5\textwidth]{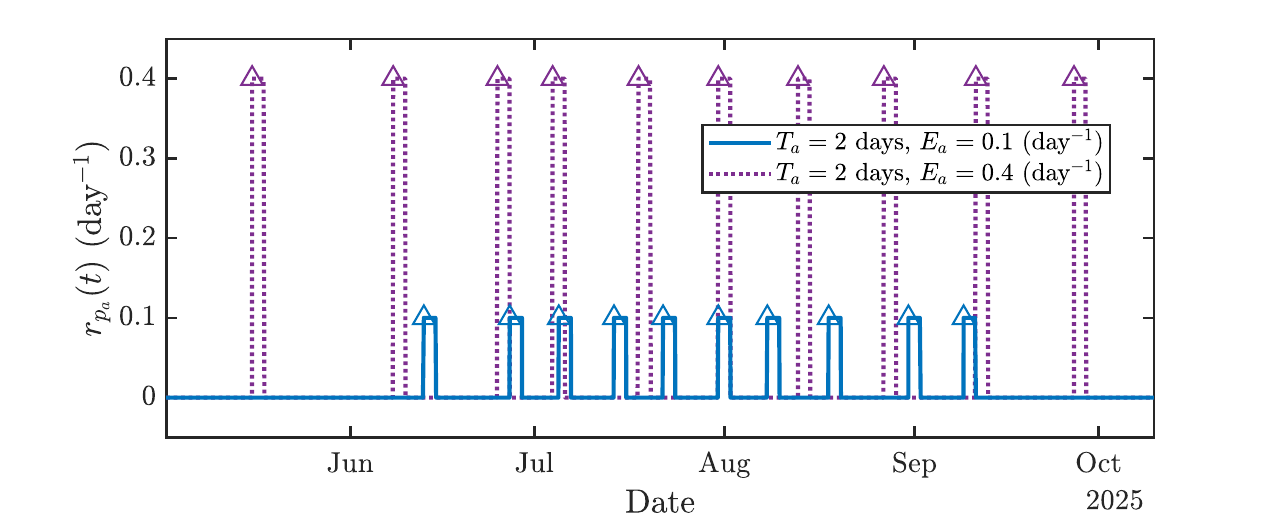}\label{Ic6}}
   \caption{(a) Dengue risk curves under a single optimal adulticide application for different levels of effectiveness and durations.
(b) Optimal timing of a sequence of adulticide applications for two application durations and effectiveness levels.}  
\label{Icc}
\end{figure*}
 
To assess the impact of adulticide application on the mosquito population and dengue transmission risk, we model the overall effect of adulticide intervention in a region through an effective adulticide-induced adult mortality rate, $E_a$, and the duration over which this control-induced mortality is active. Consequently, the additional time-dependent adult mortality rate is given by
\begin{equation}
r_{p_a}(t) = E_a \, \sigma_{T_a}(t - p_a),
\end{equation}
An interesting result emerges when comparing two scenarios: one with parameters 
$T_a = 3~\text{days}$ and $E_a = 0.7~\text{days}^{-1}$, and another with 
$T_a = 7~\text{days}$ and $E_a = 0.3~\text{days}^{-1}$. 
Figure~\ref{Ic3} shows that the reduction in dengue transmission risk is similar in both scenarios. 
This occurs because $R_0$ is a linear function of the adult mosquito population, and 
for relatively short intervention durations $T_a$ compared with the overall system dynamics, the effect of the increased adult mortality rate on the adult population can be approximated by the product $T_a E_a$. Figure~\ref{Ic3} also shows that an increase in the mortality rate of $E_a = 0.1~\text{days}^{-1}$ applied for two days does not significantly decrease the overall dengue transmission risk. Assuming that the natural mortality rate is zero, this corresponds to eliminating approximately $10\%$ of the adult mosquito population per day over a two-day period using adulticide. 
 
We also considered a scenario in which 10 adulticide treatments were implemented. 
We assume that the effectiveness was similar for all 10 treatments and optimize the timing of these interventions. 
In this scenario, the additional induced mortality rate can be written as
\[
r_{p_a}(t) = \sum_i E_a \, \sigma_{T_a}(t - p_a^i),
\]
and we found the optimal set of parameters $p_a^i$. Figure~\ref{Ic6} shows the optimal timing of these treatments. 
The optimization procedure tends to distribute the treatments over several months rather than concentrating them in a short period, which would correspond to a sharp and temporary increase in the mortality rate. 
This behavior arises because, regardless of how aggressively adulticide is applied, the larval population can persist in the environment and subsequently replenish the adult mosquito population. Figure~\ref{Ic8} shows that the reproductive number $R_0$ decreases below one when ten adulticide treatments, each with a duration of two days and effectiveness $E_a = 0.1~\text{days}^{-1}$, are optimally distributed over a period of approximately three months centered around the last week of July.
 
\subsection{Larvicide Application}\label{lar_sec}
\begin{figure*}[t]
\centering
\subfloat[]{ \includegraphics[width=0.45\textwidth]{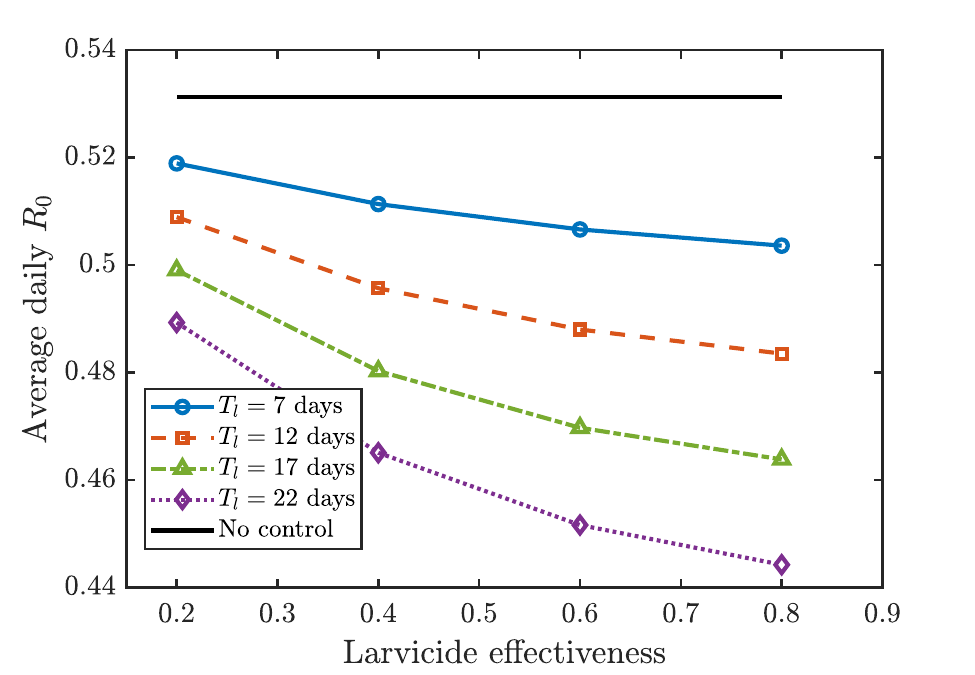}\label{Lar2}} 
\subfloat[]{ \includegraphics[width=0.45\textwidth]{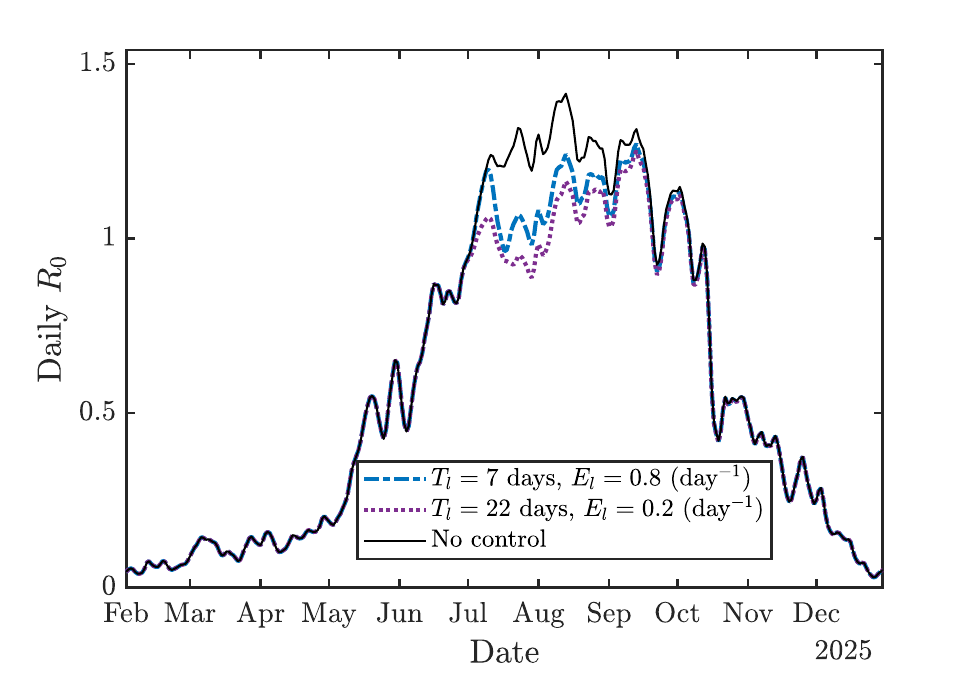} \label{Lar3} }
   \caption{(a) Average daily $R_0$ over the course of a year under a single optimal larvicide treatment with different effectiveness levels and treatment durations.
(b) Dengue risk curves under a single optimal larvicide application for two different effectiveness levels and treatment durations.}  
\label{LL1}
\end{figure*}
Similar to the adulticide treatment, we study the optimal scheduling of larvicide interventions. 
For larvicide, we assume that its effect is represented through an additional time-dependent larval mortality rate defined by
\begin{equation}
r_{p_l}(t) = E_l \, \sigma_{T_l}(t - p_l),
\end{equation}
where $E_l$ denotes the larvicide-induced larval mortality rate, and $\sigma_{T_l}$ is a rectangular function representing the intervention window, with its left edge at $p_l$ and duration $T_l$, similar to the function illustrated in Figure~\ref{sigmal}.
 
The optimization results indicate that the optimal application times $p_l$ consistently occur within approximately one week of July~1. Figure~\ref{Lar2} shows the average daily $R_0$ over the course of a year for different values of $T_l$ and $E_l$, evaluated at the optimized value of $p_l$. Comparing two scenarios, one with parameters $T_l = 7~\text{days}$ and $E_l = 0.8~\text{days}^{-1}$ and another with $T_l = 22~\text{days}$ and $E_l = 0.1~\text{days}^{-1}$, we observe that the reduction in dengue transmission risk is greater in the latter scenario. Despite its lower larval mortality rate $E_l$, the longer intervention period leads to a stronger overall reduction in transmission compared with the shorter but more aggressive treatment. We also note that, in general, our simulations indicate that larvicide treatment leads to a smaller reduction in transmission risk compared to adulticide treatment when similar values are assumed for the intervention duration and effectiveness parameters. This can be seen by comparing the curve corresponding to $T_l = 7~\text{days}$ and $E_l = 0.8~\text{days}^{-1}$ in Figure~\ref{Lar3} with the curve corresponding to $T_a = 7~\text{days}$ and $E_a = 0.3~\text{days}^{-1}$ in Figure~\ref{Ic3}.
 
In addition, we investigated a scenario in which ten larvicide treatments were implemented. Figures~\ref{Lar6} and~\ref{Lar9} illustrate the corresponding time-dependent larvicide-induced larval mortality rate and the resulting temporal evolution of $R_0$. The optimized scenario with $E_l = 0.2~\text{day}^{-1}$ distributes the rectangular control functions over a longer time period, even though the optimization allows these functions to be stacked to produce a more concentrated and aggressive intervention profile. Interestingly, the optimized average daily $R_0$ in this case is lower than that obtained under a more aggressive intervention with $E_l = 0.8~\text{day}^{-1}$ but a shorter duration of effectiveness.
 
\subsection{Optimization of Combined Control Strategies} \label{combsec}
\begin{figure*}[t]
\centering
\subfloat[]{ \includegraphics[width=0.4\textwidth]{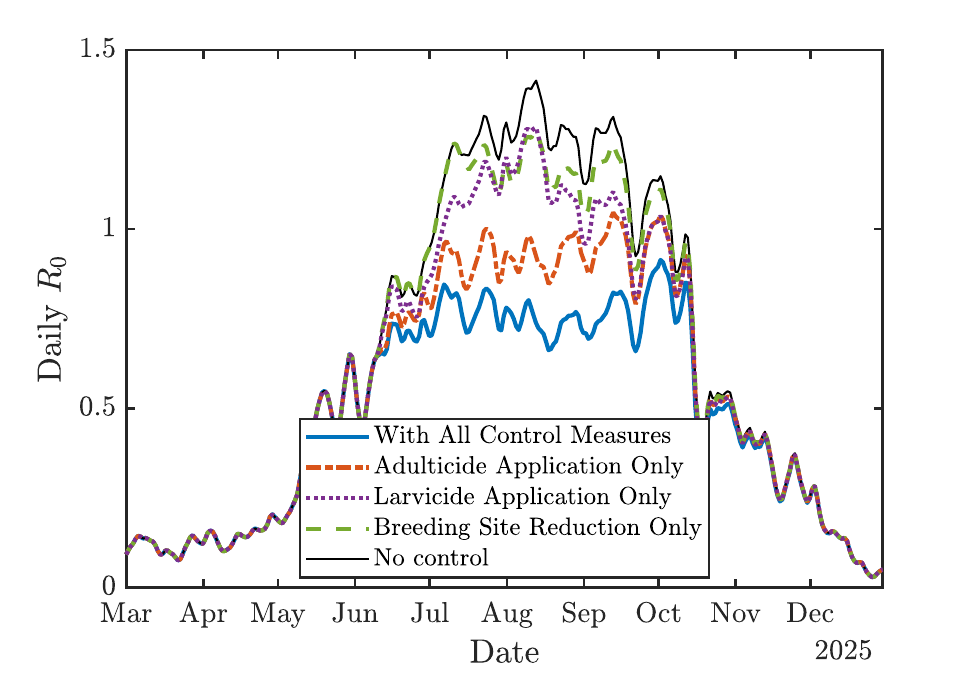} \label{all1} }
\subfloat[]{ \includegraphics[width=0.5\textwidth]{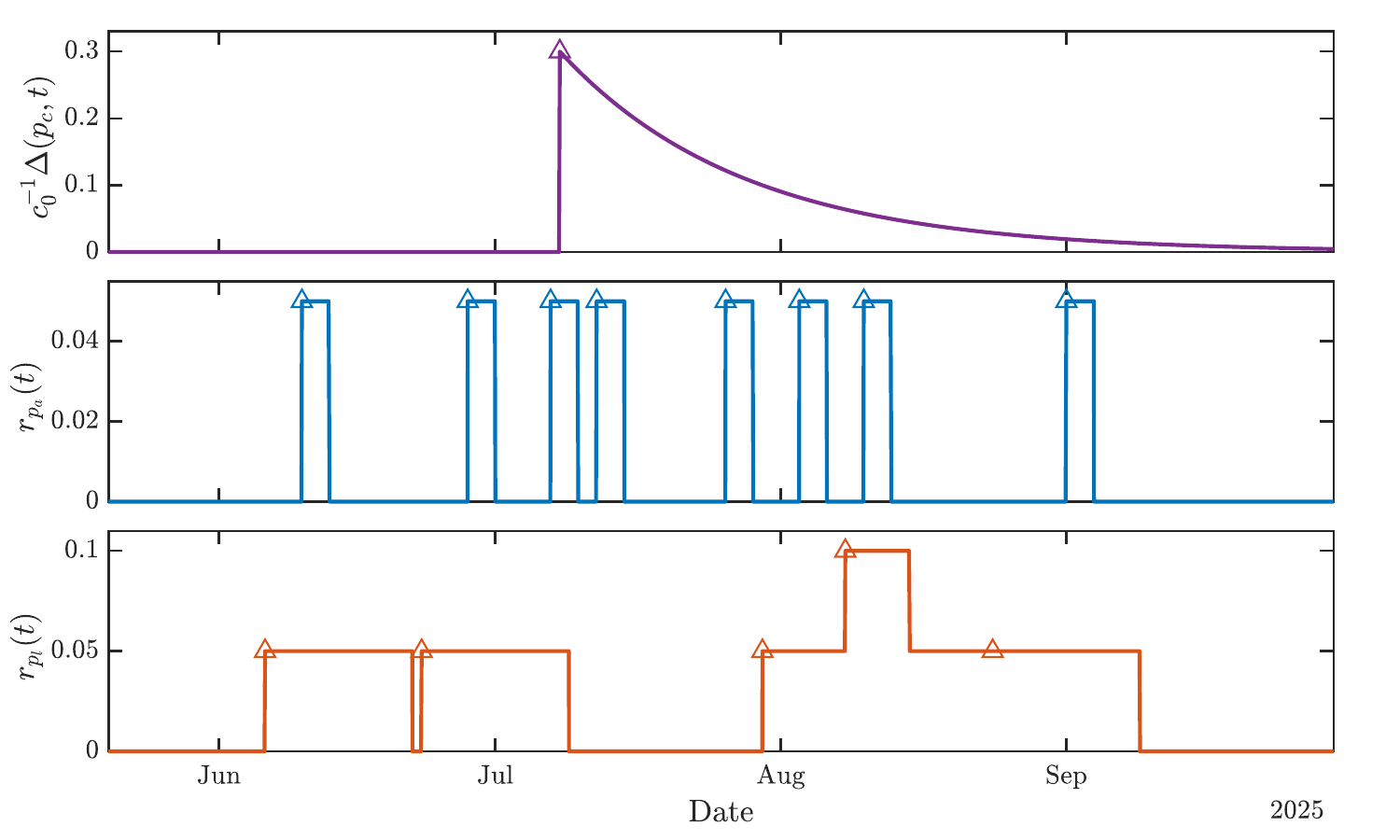}\label{all3}} 
   \caption{(a) Dengue risk curve for the scenario in which eight adulticide treatments, five larvicide treatments, and one breeding-site elimination are optimally applied, as discussed in Section~\ref{combsec}.
(b) Time-dependent profiles of the induced adult mortality rate, larval mortality rate, and breeding-site reduction when the full combination of control measures discussed in Section~\ref{combsec} is applied. This panel shows the optimal timing of each treatment.}  
\label{HH1}
\end{figure*}
To assess the impact of an optimal combination of different control measures, we consider a scenario in which eight adulticide treatments, five larvicide treatments, and one breeding-site elimination campaign are implemented. For this scenario, we assume that the adulticide and larvicide treatments are not aggressive, and the corresponding induced mortality rates can be written as
\begin{equation}
r_{p_a}(t) = \sum_{i=1}^{8} E_a\,\sigma_{T_a}(t-p_a^i), \qquad
r_{p_l}(t) = \sum_{i=1}^{5} E_l\,\sigma_{T_l}(t-p_l^i),
\end{equation}
with $E_a = 0.05~\text{day}^{-1}$, $T_a = 3~\text{days}$, and $E_l = 0.05~\text{day}^{-1}$, $T_l = 17~\text{days}$. 
 
We also assume that the initial reduction in breeding sites is $30\%$, and that the expected recovery time of the breeding habitat is 20 days. Therefore, the resulting change in the carrying capacity, as defined in Equation~\eqref{cfor}, can be written as
\begin{equation}
\Delta(p_c,t) = 0.3\,c_0\,e^{(t-p_c)/20}, \qquad t > p_c .
\end{equation}
For this scenario, we optimize the 14 intervention timing parameters $p_a^i$, $p_l^i$, and $p_c$. To mitigate convergence to local minima, we perform 100 optimization runs using randomly generated initial parameter values. 
 
Figure~\ref{all1} illustrates the effect of the combined control measures as well as the impact of each intervention when applied individually. For the parameter values considered here, the transmission risk is reduced to below one under the combined strategy, with adulticide-based interventions contributing the most to this reduction. Figure~\ref{all3} illustrates the optimal scheduling of the different interventions. 
For the combination of control measures considered here, together with their assumed effectiveness and duration, the optimization procedure distributes the treatments over the months of June, July, and August, corresponding to the peak mosquito season in Florida.
 
In practice, larvicide treatments in this region often begin in late April. However, under the resource constraints assumed in this scenario, the optimization indicates that an optimal strategy would delay the start of larvicide treatments until mid-June. This result is consistent with our earlier simulation in which greater resources were allocated to larvicide interventions. In that case, we considered a scenario with ten larvicide treatment periods, whose optimal scheduling is shown in Figure~\ref{Lar6}. In that scenario, the optimal larvicide treatments begin in late April.

\section{Discussion}
In this work, we presented a framework for optimally scheduling vector control measures. The framework accounts for the complex nature of the mosquito life cycle, which depends on environmental conditions, climate, and habitat availability, and it can incorporate a range of control measures. The optimization is based on the adjoint method and is inherently numerical, which makes the framework flexible and amenable to extension. In particular, it can be adapted to incorporate additional interventions, including biological strategies such as the release of genetically modified or \textit{Wolbachia}-infected mosquitoes, for which timing may also be critical. Therefore, the framework can be applied more broadly to interventions for other vector-borne diseases, such as West Nile virus, to alternative population-dynamics models that may account for spatial heterogeneity, and to more complex intervention settings. However, one especially practical application of this framework is closed-loop vector management, which we discuss in more detail below.

\subsection{Toward Closed-Loop Vector Management via Model Predictive Control}\label{mpc}

Operational vector management is intrinsically an adaptive, feedback-driven process. Integrated Vector Management (IVM) frameworks identify entomological surveillance and continuous monitoring as core operational pillars of evidence-based decision-making~\cite{who2012ivm}. In practice, however, vector-control settings are shaped by substantial uncertainties, including stochastic meteorological forcing (for example, rainfall events that either expand or flush aquatic habitats), fine-scale spatial heterogeneity in habitat density, and localized variation in insecticide efficacy arising from application mechanics, environmental degradation, or physiological resistance~\cite{bowman2016dengue}. Consequently, a static open-loop intervention schedule derived from a single preseason optimization is inherently fragile. In reality, practitioners operate in a closed loop: they apply a treatment, quantify the population response through surveillance, and use this feedback to determine the timing and placement of subsequent interventions~\cite{wilke2021effectiveness}.

Several biological features of the \textit{Aedes aegypti} system make a closed-loop approach especially important. First, interventions act on distinct developmental stages: larvicides target aquatic immatures, adulticides target flying adults, and habitat elimination reduces carrying capacity. As a result, the epidemiological effect of each intervention is distributed over time according to the temperature-dependent developmental delays captured by our non-Markovian model. Second, \textit{Aedes} eggs are resistant to desiccation and can persist through extended dry periods, hatching rapidly upon inundation~\cite{farnesi2009embryonic}; thus, a single unseasonal rainfall event can repopulate aquatic habitats on a timescale that invalidates a schedule optimized under dry-season assumptions. Third, density-dependent larval competition introduces compensatory feedback: reducing larval crowding through larvicide may increase the per-capita survival and fitness of the remaining immatures, partially offsetting the intervention~\cite{barrera1996competition}. Fourth, insecticide resistance profiles vary spatially and can shift over the course of a season, causing realized efficacies to differ from assumed values~\cite{chan2017cdc_surveillance}. Collectively, these mechanisms generate nonlinear, delayed, and environmentally modulated intervention responses whose realized magnitudes cannot be forecast \textit{a priori} with the precision required by open-loop optimality.

To bridge this gap between theoretical optimization and operational reality, the adjoint-based framework developed in this study is well suited for integration into a Model Predictive Control (MPC) architecture, that is, a receding-horizon strategy designed for systems requiring continual feedback and recalibration~\cite{kohler2021robust}. In epidemic management, MPC strategies that periodically re-optimize using updated state estimates have been shown to outperform open-loop policies under model mismatch and measurement noise~\cite{kohler2021robust,peni2020nonlinear}. The same reasoning applies directly to entomological control.

The MPC cycle begins with surveillance and state estimation. At each operational epoch, entomological data are collected~\cite{chan2017cdc_surveillance}, and the observed aggregate counts are allocated across the sequential substates to re-initialize the non-Markovian ODE system. A practical heuristic is to distribute mass proportionally to the equilibrium substate profile at the prevailing temperature, thereby preserving the observed total while assigning developmental age in a manner consistent with the temperature-dependent stage dynamics. When finer-grained observations are available, such as larval surveys that distinguish early from late instars, this allocation can be further constrained. For sparse or noisy trap data, sequential Bayesian data assimilation, for example through Ensemble Kalman Filters or Particle Filters, provides a principled way to update both the latent state and uncertain parameters as new observations become available~\cite{villela2015bayesian}.

Over a rolling horizon, the adjoint method is then used to solve the finite-horizon optimization problem. It is important that the horizon span multiple \textit{Aedes} generations, on the order of four to eight weeks, so that the delayed epidemiological benefits of immature-targeted interventions are properly represented~\cite{eisen2014impact}. The adjoint formulation preserves computational tractability by obtaining the full gradient vector through a single backward integration. In each cycle, vector-control units implement only the immediate segment of the optimized schedule, while later actions are deferred to subsequent updates. After treatment, new surveillance data quantify the realized population response, the horizon is advanced, the model state is updated, and the cycle repeats according to the local surveillance cadence, for example every three days for ovitrap networks or weekly for adult trapping programs~\cite{chan2017cdc_surveillance}.

This architecture aligns directly with IVM operational mandates for continuous monitoring and evaluation~\cite{who2012ivm} and localizes the effect of model mismatch and environmental stochasticity to individual surveillance epochs. This robustness advantage over open-loop control has been established in analogous epidemic-management settings~\cite{kohler2021robust}. The receding-horizon formulation also accommodates evolving meteorological information in a natural way. The open-loop solutions in Section~3 rely on a smoothed historical temperature profile, whereas an operational MPC implementation would use high-resolution short-term weather forecasts for near-term decisions and revert to climatological averages only toward the end of the horizon. Likewise, unanticipated rainfall that restores breeding habitats beyond the recovery dynamics assumed in~\eqref{cfor} would be detected at the next surveillance epoch and incorporated directly into the updated state.

From this perspective, the numerical results in Section~3 may be viewed as a theoretical baseline for the behavior of such an MPC system. Although the open-loop optimizer naturally distributes treatments over several months to avoid rapid population rebound, an MPC controller would generate this temporal spacing adaptively: compressing the schedule when early-season treatments underperform, relaxing it when they overperform, and reallocating finite resources toward periods of highest projected risk. Operational constraints, including budget ceilings, crew availability, chemical inventory, minimum re-treatment intervals, and resistance-management rotation schedules, can be incorporated directly into the finite-horizon problem as inequality constraints on the control parameters, while remaining fully compatible with adjoint-based gradient computation.

An important future extension concerns spatial resolution. The current formulation optimizes temporal scheduling, that is, \textit{when} to treat, within a homogeneously mixed setting. Operational vector control, however, also requires spatial targeting, namely \textit{where} to treat. Because \textit{Aedes aegypti} typically disperses over relatively short distances, often less than 200 meters, urban environments are naturally represented as networks of weakly coupled spatial patches~\cite{chan2017cdc_surveillance}. Coupling our non-Markovian dynamics with a spatially explicit metapopulation model and driving the MPC framework with spatially resolved trap data would yield a decision-support tool capable of dynamically dispatching resources to the urban micro-environments with the highest acute transmission risk.

\ifCLASSOPTIONcompsoc
  \section*{Acknowledgments}
\else
  \section*{Acknowledgment}
\fi
This work has been supported by the United States Department of Agriculture ARS under agreement number 58-3022-3-025.

  \section*{Declaration of Generative AI and AI-assisted technologies in the writing process}
During the preparation of this work the authors used [https://chat.openai.com/] in order to check the manuscript grammar. After using this service, the authors reviewed and edited the content as needed and take full responsibility for the content of the publication.
\newpage
 
 \setcounter{section}{0}
 \renewcommand{\thesection}{\Alph{section}}

 \setcounter{equation}{0}
\setcounter{figure}{0}
\setcounter{theorem}{0}

\renewcommand{\theequation}{\Alph{section}.\arabic{equation}}
\renewcommand{\thefigure}{\Alph{section}.\arabic{figure}}
\renewcommand{\thetheorem}{\Alph{section}.\arabic{theorem}}

\section{Appendices}
\begin{theorem}[Adjoint formula for the gradient] \label{adjointTh}
Let $p$ denote the vector of intervention timing parameters and let
$x(\cdot;p)$ be the solution of the state system
\[
h(x,\dot x,p,t)=0,\qquad g(x(0))=0.
\]
Assume that $h$ is continuously differentiable with respect to
$(x,\dot x,p)$, that the initial condition does not depend on $p$, and
that the objective
\[
F(p)=\int_0^T R_0(x(t;p),t)\,dt
\]
is continuously differentiable with respect to $x$. Let $\lambda$ solve
the adjoint system
\[
\dot{\lambda}^{T}\partial_{\dot x}h
=
\lambda^{T}\big(\partial_x h-d_t(\partial_{\dot x}h)\big)
+\partial_x R_0,
\qquad \lambda(T)=0.
\]
Then
\[
\frac{dF}{dp}
=
\int_0^T \lambda(t)^T\,\partial_p h(x,\dot x,p,t)\,dt .
\]
\end{theorem}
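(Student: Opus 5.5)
The plan is the standard Lagrangian/sensitivity argument: differentiate the constraint with respect to $p$, pair it with $\lambda$, and integrate by parts so that the adjoint equation cancels every term containing the state sensitivity $x_p:=\partial x/\partial p$. First, since $R_0$ is $C^1$ in $x$ and carries no explicit dependence on $p$, differentiation under the integral sign gives
\[
\frac{dF}{dp}=\int_0^T \partial_x R_0\big(x(t;p),t\big)\,x_p(t)\,dt .
\]
Second, differentiating the identity $h(x(t;p),\dot x(t;p),p,t)\equiv 0$ with respect to $p$ gives the sensitivity equation
\[
\partial_x h\,x_p+\partial_{\dot x}h\,\dot x_p+\partial_p h=0,
\]
with $x_p(0)=0$ because $g(x(0))=0$ does not involve $p$. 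Third, since the sensitivity equation vanishes identically, I can multiply it on the left by $\lambda^T$ and integrate over $[0,T]$:
\[
0=\int_0^T \lambda^T\big(\partial_x h\,x_p+\partial_{\dot x}h\,\dot x_p+\partial_p h\big)\,dt .
\]

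The key step is integrating the $\dot x_p$ term by parts:
\[
\int_0^T\lambda^T\partial_{\dot x}h\,\dot x_p\,dt=\big[\lambda^T\partial_{\dot x}h\,x_p\big]_0^T-\int_0^T\big(\dot\lambda^T\partial_{\dot x}h+\lambda^T d_t(\partial_{\dot x}h)\big)x_p\,dt .
\]
The boundary term vanishes at $t=T$ because $\lambda(T)=0$, and at $t=0$ because $x_p(0)=0$. Substituting this back, the identity becomes
\[
0=\int_0^T\Big(\lambda^T\big(\partial_x h-d_t(\partial_{\dot x}h)\big)-\dot\lambda^T\partial_{\dot x}h\Big)x_p\,dt+\int_0^T\lambda^T\partial_p h\,dt .
\]
By the adjoint equation the first bracket equals $-\partial_x R_0$. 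Hence $0=-\tfrac{dF}{dp}+\int_0^T\lambda^T\partial_p h\,dt$, which is the claim. I would also check the sign bookkeeping against the explicit system \eqref{odeB}: there $\partial_{\dot x}h$ is the constant diagonal matrix with entries $1/J$ (so $d_t\partial_{\dot x}h=0$), and the resulting equation reproduces the stated adjoint ODEs.

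The main obstacle is regularity rather than algebra. The argument needs $x(\cdot;p)$ to be differentiable in $p$, with $x_p$ absolutely continuous in $t$, and it needs $t\mapsto\partial_{\dot x}h(x(t),\dot x(t),p,t)$ to be differentiable so that $d_t(\partial_{\dot x}h)$ makes sense. I would handle this by assuming, as holds for \eqref{odeF}, that $\partial_{\dot x}h$ is invertible along the trajectory. Then the implicit function theorem turns $h=0$ into an explicit ODE $\dot x=\phi(x,p,t)$ with $\phi\in C^1$, and classical differentiable dependence on parameters yields $x_p$ as the solution of the linear variational equation. Under the same invertibility assumption the adjoint equation is a linear ODE with continuous coefficients, so it has a unique solution with $\lambda(T)=0$, and the integration by parts is justified. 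The same assumption also justifies interchanging $d/dp$ with $\int_0^T$, since $\sigma_l$, $\sigma_a$ and $\Delta$ are $C^1$ in the timing parameters.
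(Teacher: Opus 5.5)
Your proof is correct and is essentially the paper's argument. The paper differentiates the augmented functional $\int_0^T (R_0+\lambda^T h)\,dt$, whose added term is exactly your pairing of $\lambda$ with the sensitivity equation. It then uses the same integration by parts, with the boundary term removed by $\lambda(T)=0$ and the $p$-independence of $x(0)$; your regularity discussion, which the paper omits, is a welcome addition.

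One small correction to your aside about \eqref{odeB}. Write \eqref{odeF} as $\tfrac1J\dot x=f(x,p,t)$. Then \eqref{odeB} and the paper's gradient formulas correspond to $h=\dot x-Jf$, i.e.\ $\partial_{\dot x}h=I$, not to $\partial_{\dot x}h=\tfrac1J I$. With your normalization the source term would be $J\,\partial R_0/\partial A_j$ and $\lambda$ would be scaled by $J$, although the resulting $dF/dp$ is the same.
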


\begin{proof}
Consider the augmented functional
\[
\mathcal{J}(x,p,\lambda)
=
\int_0^T
\Big(
R_0(x,t)+\lambda(t)^\top h(x,\dot{x},p,t)
\Big)\,dt .
\]
Since the state constraint satisfies
\[
h(x,\dot{x},p,t)=0
\]
along admissible trajectories, we have
\[
F(p)=\mathcal{J}(x(\cdot;p),p,\lambda)
\]
for any sufficiently smooth \(\lambda\).

Differentiating with respect to \(p\) gives
\small{
\[
\frac{dF}{dp}
=
\int_0^T
\left(
\partial_x R_0 \,\frac{\partial x}{\partial p}
+
\lambda^\top \partial_x h \,\frac{\partial x}{\partial p}
+
\lambda^\top \partial_{\dot{x}} h \,\frac{\partial \dot{x}}{\partial p}
+
\lambda^\top \partial_p h
\right)\,dt .
\]
}
Using
\[
\frac{\partial \dot{x}}{\partial p}
=
d_t\!\left(\frac{\partial x}{\partial p}\right),
\]
we integrate the third term by parts:
\[
\int_0^T
\lambda^\top \partial_{\dot{x}} h \,\frac{\partial \dot{x}}{\partial p}\,dt
=
\left[
\lambda^\top \partial_{\dot{x}} h \,\frac{\partial x}{\partial p}
\right]_0^T
-
\int_0^T
d_t\!\left(\lambda^\top \partial_{\dot{x}} h\right)
\frac{\partial x}{\partial p}\,dt .
\]
Hence,
\[
\begin{split}    
\frac{dF}{dp}
&=
\int_0^T
\left[
\partial_x R_0
+
\lambda^\top \partial_x h
-
d_t\!\left(\lambda^\top \partial_{\dot{x}} h\right)
\right]
\frac{\partial x}{\partial p}\,dt\\
&+
\int_0^T
\lambda^\top \partial_p h\,dt
+
\left[
\lambda^\top \partial_{\dot{x}} h \,\frac{\partial x}{\partial p}
\right]_0^T .
\end{split}
\]

Now choose \(\lambda\) so that the coefficient of \(\partial x/\partial p\) vanishes, namely,
\[
\dot{\lambda}^\top \partial_{\dot{x}} h
=
\lambda^\top\!\left(\partial_x h - d_t(\partial_{\dot{x}} h)\right)
+
\partial_x R_0 .
\]
Also, since \(\lambda(T)=0\) and the initial condition is independent of \(p\), the boundary term vanishes. Therefore,
\[
\frac{dF}{dp}
=
\int_0^T
\lambda^\top \partial_p h\,dt .
\]
This proves the result.
\end{proof}
\begin{figure}[h]
\centering
\subfloat[]{ \includegraphics[width=1\columnwidth]{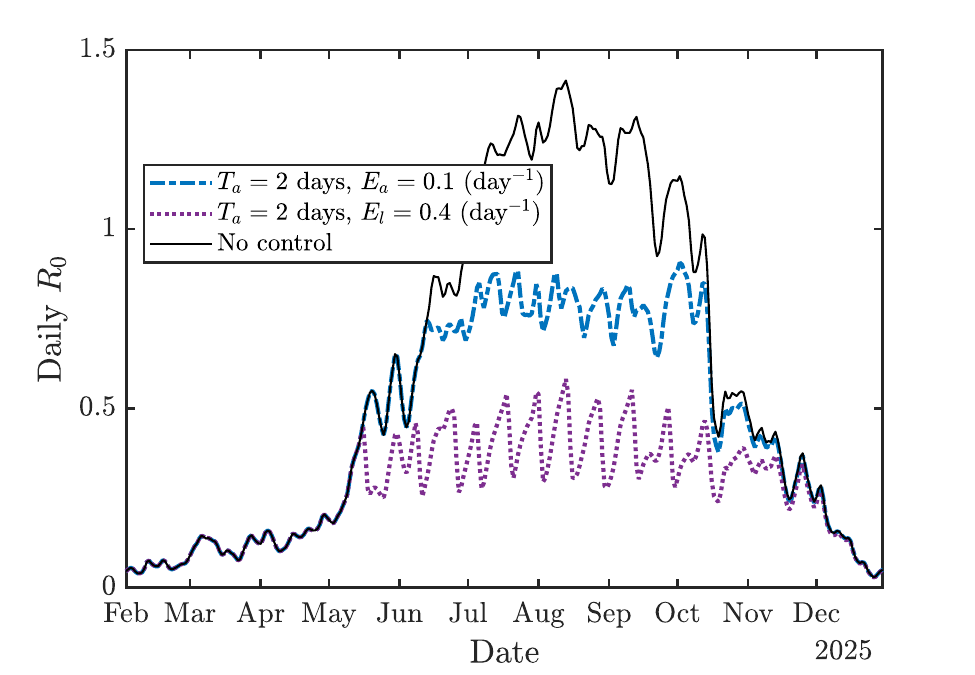}\label{Ic8}}
   \caption{The figure shows the $R_0$ curves for two adulticide-treatment scenarios, each consisting of ten treatments of two-day duration. The two curves correspond to effectiveness levels $E_a = 0.1\,\mathrm{day}^{-1}$ and $E_a = 0.4\,\mathrm{day}^{-1}$, respectively (see section \ref{ad_sec}). }  
\label{Icapp1}
\end{figure}

\begin{figure*}[h]
\centering

\subfloat[]{ \includegraphics[width=0.5\textwidth]{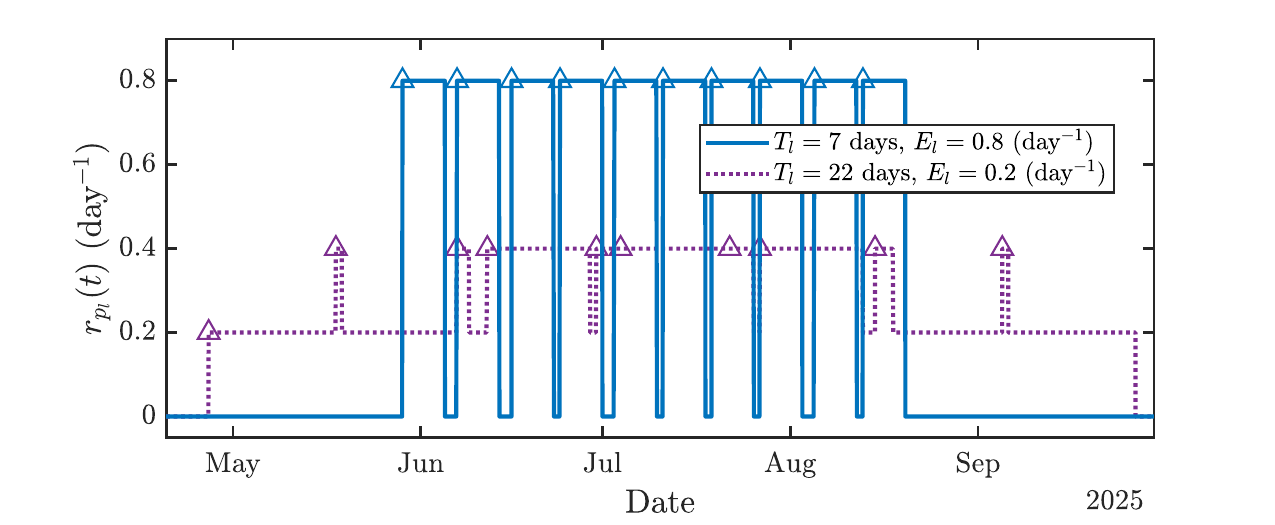}\label{Lar6}}
\subfloat[]{ \includegraphics[width=0.40\textwidth]{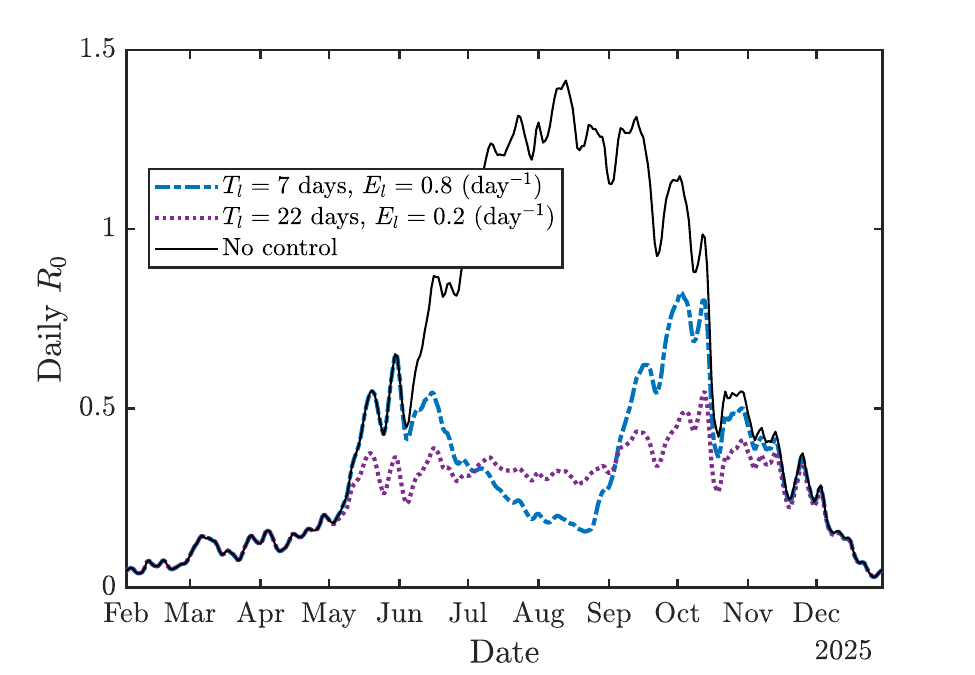}\label{Lar9}}
   \caption{ Panels (a) and (b) show the time-dependent larvicide-induced larval mortality rate associated with ten optimally scheduled larvicide applications and the corresponding temporal evolution of $R_0$, respectively (see Section~\ref{lar_sec}).}  
\label{Larapp1}
\end{figure*}

\newpage
\ifCLASSOPTIONcaptionsoff
  \newpage
\fi

\bibliographystyle{IEEEtran}
\bibliography{Refrences}

%




\end{document}